\documentclass{article}
\usepackage{amsmath,amssymb,amsthm}
\usepackage{amscd}
\usepackage{graphicx}
\newtheorem{theorem}{Theorem}
\newtheorem{definition}{Definition}
\newtheorem{remark}{Remark}

\title{The nontrivial kernel of Manturov-Nikonov map from classical braids to virtual braids}
\author{Yangzhou Liu}
\date{March 2026}

\begin{document}
\maketitle
\begin{abstract}
In 2022, V. O. Manturov and I. M. Nikonov \cite{Man22} constructed two composite maps, one of them is following:
$$\begin{CD} 
PB_{n+1} @>p_k>> CPB_{n} @>f_d>> VCB_{n}@>\rho>> GL_n(\mathbb{Z}[t^{\pm 1},s^{\pm 1}])
\end{CD}$$

In this paper, we prove that M-N is unfaithful if $k\geq 6$ since Burau kernel is a subgroup of M-N kernel.

\end{abstract}

\textbf{Keywords:} braid, virtual braid, Burau representation, Manturov-Nikonov map, faithfulness

\textbf{AMS MSC: 57K12, 20F36, 57M27} 

\section{Introduction}
As we know, the representation theory of classical braids is more established. For example, we have the Burau representation \cite{Big99}, which is not faithful if $n\geq 5$ for $B_n$, and $n=4$ are still open; and the Lawrence-Krammer-Bigelow representation (which is faithful for $n\geq 1$!\cite{Big02}). Meanwhile, unlike classical knots, virtual knots have many unique properties, such as the parity \cite{Man10}. Thus, if we can map classical braids into virtual braids, we can get some new properties. This is the starting point for the M-N map. The paper \cite{Man22} provides an example of an element in the kernel of the Burau representation that is detected by the M-N map. In contrast, this paper demonstrates that for sufficiently large $k$ (specifically, $k \geq 6$), the Burau kernel becomes a subgroup of the kernel of the M-N map.

\section{Preliminaries and Notations}
Let $\beta$ be a pure braid on $n$ strands. Then $\beta=\{\beta_k\}_{k=1}^n$ where $\beta_k: [0,1]\rightarrow \mathbb{R}^2\cong \mathbb{C}$ are the strands of the braid.

\begin{definition}
Fix an index $k\in\{1,\cdots,n\}$. The maps $p_k(\beta)_l: [0,1]\rightarrow\mathbb{S}^1 = \{z \in \mathbb{C} \mid |z| = 1\}$,$ l\neq  k$, given by the formulas 
$$p_k:\beta_l \mapsto \cfrac{\beta_l(t)-\beta_k(t)}{|\beta_l(t)-\beta_k(t)|},\quad l\neq k.$$
thus we have (Proposition 1 in \cite{Man22}) 
\begin{equation}
            \sigma_i^\epsilon\mapsto 
        \begin{cases}
             \sigma_{k-i-1}^\epsilon,& i\neq k,k-1; \\
             \zeta^{-1},& i= k-1,\epsilon=1; \\
             \Delta_c,& i= k-1,\epsilon=-1; \\
             \Delta_c^{-1},& i= k,\epsilon=1; \\
            \zeta,& i= k,\epsilon=-1.
        \end{cases} 
        \end{equation}
where $\Delta_c=\sigma_1\cdots\sigma_{n - 1}$.
\end{definition}

\begin{figure}
    \centering
    \includegraphics[width=1\linewidth]{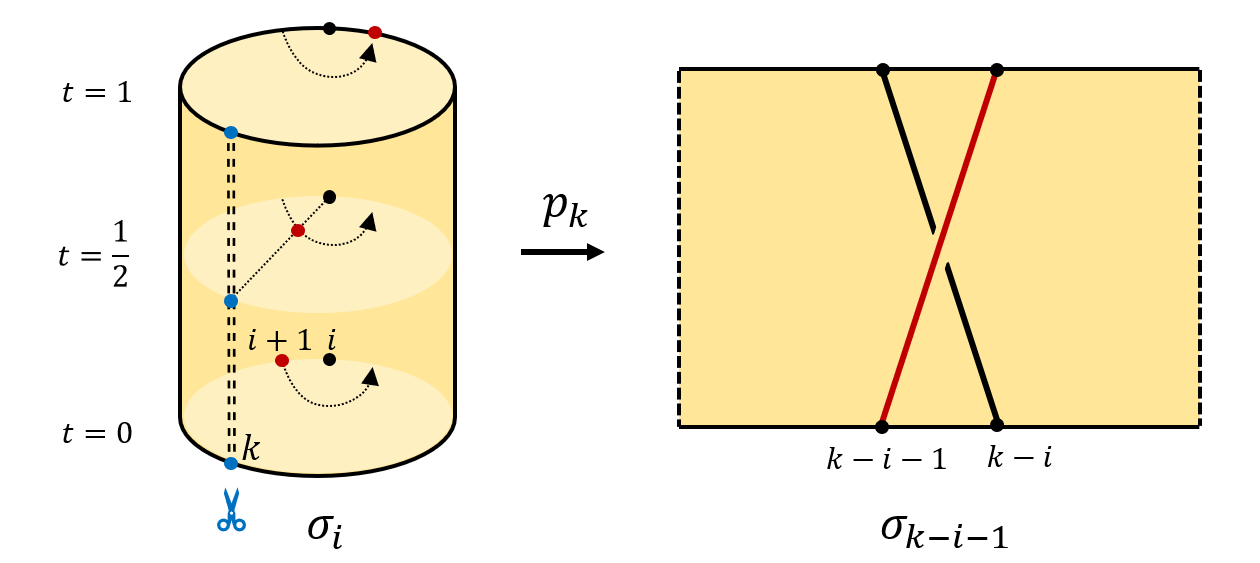}
    \caption{$p_k$}
    \label{fig:1}
\end{figure}
Let $\beta'$ be a pure braid diagram in the cylinder on $n'= n-1$ strands. Assume that the initial configuration of the braid $\{\beta_k'(0)\}\subset \mathbb{S}^1$ is in general position (here this means that $\frac{\beta_k'(0)}{\beta_l'(0)}, k\neq l$, are transcendental numbers). 
\begin{definition}
Let $d\in \mathbb{N}$, consider the map $f_d(z)=z^d$. This geometric construction induces a homomorphism on the level of braid groups, whose action on the standard Artin generators is given by the following formula (Proposition 1 in [Man22]):
 \begin{equation}\notag
    \begin{array}{cccc}
    f_d:&\sigma_i &\longmapsto& \sigma_i \\
        &\zeta &\longmapsto& \zeta(\Delta_v \zeta)^{d-1}
    \end{array} 
\end{equation}
where $\Delta_v=\tau_1\cdots \tau_{n-1}$, and $\tau_i$ is virtual crossings generators.  
\end{definition}

\begin{figure}
    \centering
    \includegraphics[width=1\linewidth]{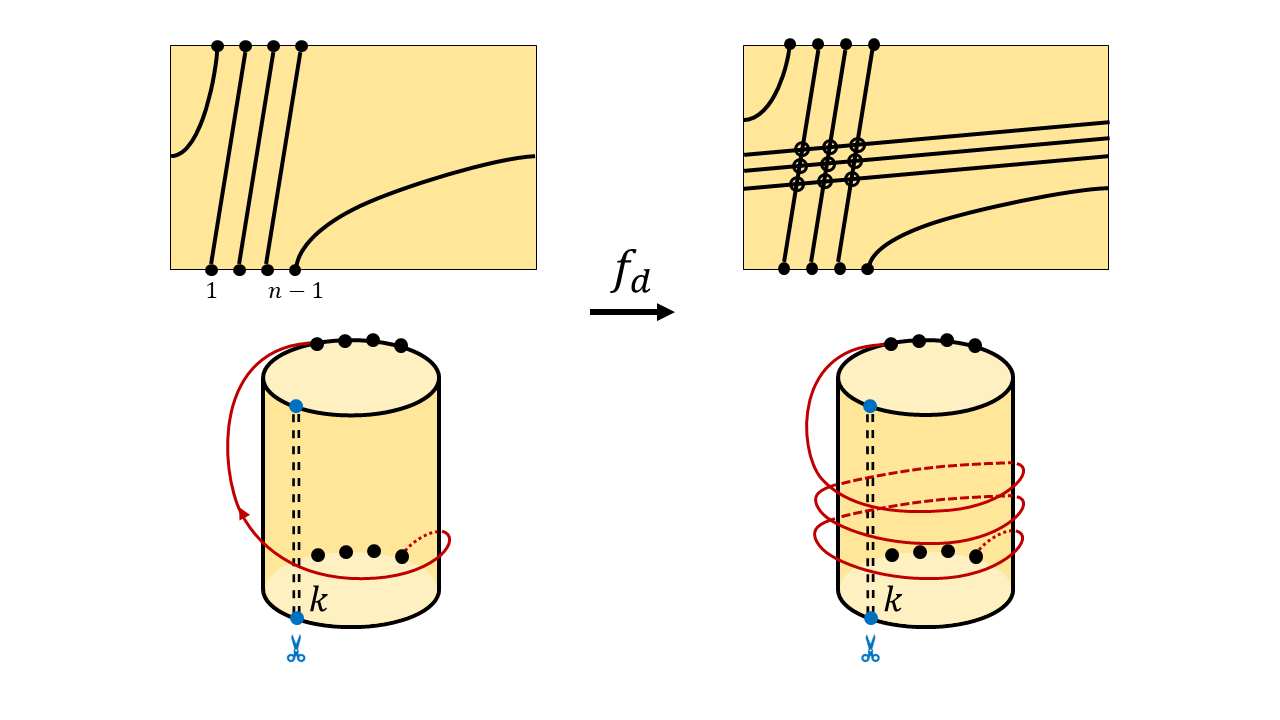}
    \caption{$f_d$}
    \label{fig:2}
\end{figure}

\begin{definition}
The map $\rho:VCB_n\rightarrow GL(n,\mathbb{Z}[t^{\pm 1},s^{\pm 1}])$, 
$$\rho:
         \sigma_k\mapsto \begin{pmatrix}
            I_{k-1}\\
            &1-t& t\\
            &1&0\\
            &&& I_{n-k-1}
         \end{pmatrix},\qquad 
         \tau_k\mapsto \begin{pmatrix}
         I_{k-1}\\
           &0&s\\
           &s^{-1}&0\\
           &&& I_{n-k-1}
         \end{pmatrix},$$ 
        $$\zeta\mapsto
         \begin{pmatrix}
            {\textbf{0}}&I_{n-1}\\
            1& {\textbf{0}}
         \end{pmatrix}.$$
\end{definition}
\begin{remark}
Let $\psi: B_n\rightarrow GL(n,\mathbb{Z}[n,t^{\pm 1} ])$ denotes Burau representation, thus $\rho|_{B_n}=\psi$. 
\end{remark}

\section{Unfaithfulness of M-N map}
\begin{theorem}
Manturov-Nikonov map $\rho\circ f_d\circ p_k$ is unfaithful for $k\geq 6$ and any $d\in \mathbb{Z}$.
\end{theorem}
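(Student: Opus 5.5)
The plan is to exhibit a nontrivial pure braid in $PB_{n+1}$ whose image under $\rho\circ f_d\circ p_k$ is the identity matrix. The idea is to take a braid that $p_k$ sends into a copy of the classical braid group on the strands ``below'' $k$, on which $f_d$ is the identity and $\rho$ restricts to the Burau representation $\psi$ (Remark). A nontrivial element of the Burau kernel then gives a nontrivial element of the Manturov--Nikonov kernel.

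First, restrict to braids built only from generators $\sigma_i$ with $i\le k-2$, i.e.\ avoiding $\sigma_{k-1}$ and $\sigma_k$. By formula (1), $p_k(\sigma_i^{\pm1})=\sigma_{k-i-1}^{\pm 1}$ for these $i$. So on the subgroup $B_{k-1}=\langle\sigma_1,\dots,\sigma_{k-2}\rangle\subset B_{n+1}$, the map $p_k$ acts as the ``flip'' $\sigma_i\mapsto\sigma_{k-1-i}$. This flip is an isomorphism onto $\langle \sigma_1,\dots,\sigma_{k-2}\rangle\subset CPB_n$, and it is induced by conjugating by the half twist. Because the image contains no $\zeta$, Definition of $f_d$ gives $f_d(\sigma_j)=\sigma_j$, so $f_d$ is the identity on it for every $d$. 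By the Remark, $\rho$ restricted to this subgroup is the Burau matrix $\psi$ of the $(k-1)$-strand braid, placed as a block $\psi(\cdot)\oplus I_{n-k+1}$. This block form holds because the matrices of $\sigma_j$, $j\le k-2$, act only on the first $k-1$ coordinates.

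Second, invoke the unfaithfulness of Burau for $B_m$, $m\ge5$ \cite{Big99}. For $k\ge6$ we have $k-1\ge5$, so there is a nontrivial $\gamma\in B_{k-1}$ with $\psi(\gamma)=I$. We need $\gamma$ pure, but Burau-kernel elements are automatically pure: the Burau representation at $t=1$ is the permutation representation, so $\psi(\gamma)=I$ forces the permutation of $\gamma$ to be trivial. Let $\beta=\bar\gamma^{-1}$, the preimage under the flip, viewed in $B_{n+1}$ on the first $k-1$ strands. Then $\beta$ is a nontrivial pure braid in $PB_{n+1}$. Its image $\rho f_d p_k(\beta)$ equals $\psi(\gamma)\oplus I=I$, so $\beta$ lies in the kernel. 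More generally, every element of the flipped Burau kernel lies in the Manturov--Nikonov kernel, which is the claimed inclusion.

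The main obstacle is justifying that formula (1) genuinely gives a homomorphism on the relevant subgroup, since $p_k$ is only defined on pure braids. My plan is to apply (1) to a word for $\beta$ in the $\sigma_i$ and note that $\beta$, being pure, is in the domain. The word avoids $\sigma_{k-1}^{\pm1}$ and $\sigma_k^{\pm1}$, so geometrically strand $k$ is a straight line separated from the strands involved. The normalized directions of the other strands relative to strand $k$ then reproduce exactly the flipped braid. A secondary point to check is that $\psi(\bar\gamma)=I$ iff $\psi(\gamma)=I$. This holds because the flip is conjugation by $\Delta$, and $\psi(\Delta\gamma\Delta^{-1})=\psi(\Delta)\psi(\gamma)\psi(\Delta)^{-1}$. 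Finally, $d\in\mathbb Z$ plays no role, since no $\zeta$ appears.
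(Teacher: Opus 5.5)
Your proposal is correct and follows the same route as the paper. You pull back a Burau-kernel element through $p_k$ using only the generators $\sigma_i$ with $i\le k-2$, on which $p_k$ acts by $\sigma_i\mapsto\sigma_{k-1-i}$; $f_d$ is then the identity because no $\zeta$ appears, and $\rho$ restricts to Burau. Your block decomposition $\rho=\psi_{k-1}\oplus I_{n-k+1}$ and your check that Burau-kernel elements are pure make explicit what the paper leaves implicit, since the paper only writes out the case $k=6$, $n=5$ with Bigelow's element $\alpha\in B_5$.
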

\begin{proof}
As we know, the Burau representation is unfaithful for $n\geq 5$\cite{Big99}, for exmaple, the following element lies in the kernel in the Burau representation $\psi$,
$$\alpha:=[\gamma\sigma_4\gamma^{-1},\sigma_4\sigma_3\sigma_2\sigma_1^2\sigma_2\sigma_3\sigma_4]$$
where $\gamma\in B_5$ thus $\alpha$ so does. 

We can pick $p_6:PB_6\rightarrow CPB_5$, we have $$p_k(\sigma_i)=\sigma_{k-i-1}, i\neq k,k-1$$
i.e.
$$\sigma_4\mapsto \sigma_1,\quad\sigma_3\mapsto \sigma_2$$
$$\sigma_2\mapsto \sigma_3,\quad\sigma_1\mapsto \sigma_4$$
thus there exist a $\tilde{\alpha}$, satisfying
$$p_6(\tilde{\alpha})=\alpha$$
then we have 
$$\rho \circ f_d \circ p_6(\tilde{\alpha})=\rho \circ f_d(\alpha)=\rho (\alpha)=\psi(\alpha)=I$$
since $f_d|_ {B_5}=id|_ {B_5}$,except for $f_d(\zeta)=\zeta(\Delta_v\zeta)^{d-1}$, but we avoid that $\zeta$ appears in $\alpha$.

Therefore, $\tilde{\alpha}\in \ker \rho\circ f_d\circ p_6$ and it completes the proof. 
\end{proof}

\begin{theorem}
Let $n = 2m$ be an even integer and $d = 1$. Then the M-N map $\rho \circ f_1 \circ p_k$ is not faithful.
\end{theorem}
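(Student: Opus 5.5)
The plan is to exploit the fact that for $d=1$ the map $f_1$ is just the inclusion $CB_n\hookrightarrow VCB_n$, since $f_1(\zeta)=\zeta(\Delta_v\zeta)^{0}=\zeta$. Meanwhile $\rho(\zeta)$ is the cyclic permutation matrix $\begin{pmatrix}\textbf{0}&I_{n-1}\\ 1&\textbf{0}\end{pmatrix}$, which has finite order $n$ in $GL_n(\mathbb{Z}[t^{\pm1},s^{\pm1}])$. On the other hand, $\zeta$ (rotation of the cylinder) has infinite order in $CB_n$. So any pure braid in $PB_{n+1}$ whose image under $p_k$ is a nontrivial power of $\zeta$ with exponent divisible by $n$ will lie in the kernel. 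This gives unfaithfulness without any appeal to the Burau kernel, unlike the previous Theorem, and so it works for small $n$ as well.

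First I would identify a natural candidate. The full twist $\Delta^2\in PB_{n+1}$ is central, and geometrically every strand $\beta_l$ makes exactly one full turn around every other strand, in particular around $\beta_k$. Under the normalisation $p_k(\beta)_l=(\beta_l-\beta_k)/|\beta_l-\beta_k|$, the resulting cylinder braid on $n$ strands is a rigid rotation of the whole configuration on $\mathbb{S}^1$ by $2\pi$, that is, $\zeta^{\pm n}$. I would confirm this algebraically with the formula of Proposition 1 of \cite{Man22} listed in the definition of $p_k$. Write $\Delta^2=(\sigma_1\cdots\sigma_n)^{n+1}$, or as a product of the standard pure generators $A_{k,l}$ involving strand $k$ times a pure braid not involving strand $k$. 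Then push each letter through $p_k$, using $\sigma_{k-1}\mapsto\zeta^{-1}$, $\sigma_k\mapsto\Delta_c^{-1}$, and $\sigma_i\mapsto\sigma_{k-i-1}$ otherwise. I would simplify using the cylinder relations between $\zeta$, $\Delta_c=\sigma_1\cdots\sigma_{n-1}$ and the $\sigma_i$, namely conjugation by $\zeta$ shifting indices, and the relation expressing $\Delta_c^n$ in terms of the classical full twist and a power of $\zeta$.

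This is where I expect the parity hypothesis $n=2m$ to enter. Depending on the orientation conventions, the image might come out as $\zeta^{n}$ times a classical braid, such as a power of $\Delta_c$ or of the full twist of $B_n$. In that case I would instead take a suitable product, for example $\Delta^2$ combined with pure braids in strands other than $k$, or a power such as $\Delta^{2m}$, so that the classical part cancels or has scalar Burau image. Here I would use that the unreduced Burau matrix of $\Delta_c$ is a companion-type matrix with $\rho(\Delta_c)^n$ explicitly computable. If evenness is needed for the $t$-powers or a sign $(-1)^{n}$ to cancel in $\rho$, the condition $n=2m$ is exactly what makes the final matrix equal to $I$.

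The final step is nontriviality of the chosen element. In $PB_{n+1}$ the full twist generates the infinite cyclic center, so the element is nontrivial. Alternatively, it suffices that its image $\zeta^{n}$ (times whatever classical factor remains) is nontrivial in $CPB_n$, which follows from $\zeta$ having infinite order, detectable by the total winding number of the strands around the cylinder. I expect the main obstacle to be the second step: the precise bookkeeping of $p_k(\Delta^2)$ through the generator formulas, including the conventions for $\zeta$ versus $\zeta^{-1}$ and the role of $\Delta_c$, which determines exactly which power, and hence why evenness of $n$, yields the identity matrix.
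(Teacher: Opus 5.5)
Your route differs from the paper's and is the more reliable of the two. The paper takes $\beta=\sigma_k^{-2m}$ and reads $\sigma_k^{-1}\mapsto\zeta$ off the generator table for every letter, getting $p_k(\beta)=\zeta^{2m}=\zeta^{n}$. It uses $n=2m$ only so that $\sigma_k^{-n}$ is pure.

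You use the full twist instead, and your central claim $p_k(\Delta^2)=\zeta^{\pm n}$ is correct. The step you flag as the main obstacle needs no bookkeeping through Proposition~1 of Manturov--Nikonov. Rigid rotation of the plane by $2\pi$ about any centre $c$ represents the full twist: moving $c$ continuously is an isotopy through pure braids with fixed endpoints, since a rigid motion never creates collisions. So take $c=\beta_k(0)$. Then $\beta_k$ is constant and $p_k(\Delta^2)_l(t)=e^{2\pi i t}\,p_k(\Delta^2)_l(0)$. This is a rigid rotation of the $n$ directions with no crossings, i.e.\ $\zeta^{\pm n}$. Your remark that each strand winds once around $\beta_k$ would not by itself give this; the rigid-rotation representative does.

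So no classical factor appears, and $\rho(f_1(p_k(\Delta^2)))=\rho(\zeta)^{\pm n}=I$. The parity hypothesis plays no role: your argument proves the statement for every $n$. Drop the speculative third paragraph. Its fallback of arranging a ``scalar Burau image'' would not work in any case, since $t^{j}I\neq I$ for $j\neq 0$.

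What the paper's route buys is a one-line computation, but that line does not hold up. The table is positional and $p_k$ is only defined on pure braids. After the first $\sigma_k^{-1}$ the distinguished strand sits in position $k+1$. The second $\sigma_k^{-1}$ therefore falls under the case $i=(k+1)-1$, $\epsilon=-1$ and contributes $\Delta_c$, not $\zeta$. This gives $p_k(\sigma_k^{-2})=\zeta\Delta_c$ up to relabelling, and $\det\rho(\zeta\Delta_c)=t^{n-1}\neq 1$.

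A convention-free check is the total winding number $w$ of a cylinder braid, normalised by $w(\zeta)=1$ and $w(\sigma_i)=0$. Here $w(p_k(\beta))$ is the sum over $l\neq k$ of the winding numbers of $\beta_l$ about $\beta_k$. That sum is $\pm m$ for $\sigma_k^{-2m}$, whereas $w(\zeta^{2m})=2m$. For $\Delta^2$ the same count gives $\pm n=w(\zeta^{\pm n})$, which matches your claim. So your full-twist argument is the one that actually establishes the theorem, and it does so without the hypothesis $n=2m$.
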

\begin{proof}
Consider the non-trivial pure braid $\beta = \sigma_k^{-2m} \in PB_{n+1}$. Since $p_k(\sigma_k^{-1}) = \zeta$, we have $p_k(\beta) = \zeta^{2m}$. By definition, $f_1(\zeta^{2m}) = \zeta^{2m}$. Finally, $\rho(\zeta)$ is a permutation matrix of order $n$, hence $\rho(\zeta^{2m}) = \rho(\zeta^n) = I$. Therefore, $\beta \in \ker(\rho \circ f_1 \circ p_k)$.
\end{proof}

\section*{Acknowledgments}
I extremely grateful to Vassily O. Manturov and  Igor M. Nikonov.

\end{document}